\documentclass[12pt]{amsart}

\usepackage{graphicx}
\usepackage{fancyhdr}
\usepackage{amsthm}
\usepackage{lipsum}
\usepackage{amsfonts, amsmath}
\usepackage[colorlinks=true,linkcolor=red,citecolor=blue]{hyperref}
\usepackage[margin=1.25in]{geometry}
\usepackage{tikz}
\usepackage{setspace}
\usepackage[square,numbers]{natbib}

\usepackage{float}
\usepackage[normalem]{ulem}
\usepackage{verbatim}

\newtheorem{thm}{Theorem}[section]

\newtheorem{lem}[thm]{Lemma}
\newtheorem{prop}[thm]{Proposition}

\newtheorem{quest}[thm]{Question}

\newtheorem{innerthmrep}{Theorem}
\newenvironment{thmrep}[1]
  {\renewcommand\theinnerthmrep{#1}\innerthmrep}
  {\endinnerthmrep}

\newtheorem{innerprprep}{Proposition}
\newenvironment{prprep}[1]
  {\renewcommand\theinnerprprep{#1}\innerprprep}
  {\endinnerprprep}
  
\newtheorem{defn}[thm]{Definition}

\newtheorem{remk}[thm]{Remark}

\newcommand{\Zbb}{ {\mathbb Z}}

\newcommand{\Ztwo}{\mathbb{Z}/2\mathbb{Z}}
\newcommand{\Bpartial}{\overline{\partial}}

\newcommand{\Mcal}{\mathcal{M}}
\newcommand{\Mcalg}{\mathcal{M}_g}
\newcommand{\Ccal}{\mathcal{C}}
\newcommand{\Acal}{\mathcal{A}}

\newcommand{\eps}{\epsilon}

\usepackage{overpic}

\title{Spectral gaps of double covers of hyperbolic surfaces}
\author{Lawford Hatcher}
\address[Lawford Hatcher]{D\'epartement de math\'ematiques et de statistique, Universit\'e de Montr\'eal, CP 6128 succ
Centre-Ville, Montr\'eal QC H3C 3J7, Canada}
\email{lawford.harrison.hatcher@umontreal.ca}
\author{Bram Petri}
\address[Bram Petri]{Institut de Math\'ematiques de Jussieu--Paris Rive Gauche and  Institut universitaire de France ; Sorbonne Universit\'e and Universit\'e Paris Cit\'e, CNRS, IMJ-PRG, F-75005 Paris, France}
\email{bram.petri@imj-prg.fr}
\date{\today}

\begin{document}

\begin{abstract}
We prove that, in most moduli spaces of orientable hyperbolic surfaces of finite area, there exist surfaces all of whose covers of degree two have a new small eigenvalue. On the other hand, we show that four punctured spheres always admit a double cover without new small eigenvalues and that there exist closed surfaces of any genus with the same property. We prove the last two of these results using a version of Cheeger's inequality adapted to double covers.
\end{abstract}

\maketitle

\begin{figure}[ht]
    \centering
    \begin{overpic}{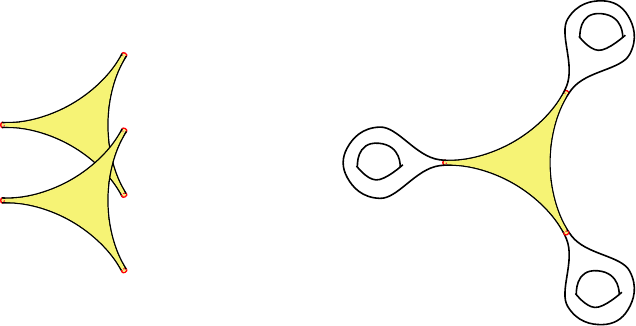}
    \put(32,24){{\Huge $\longrightarrow$}}
    \put(11,18.5){$\widehat{P}_1$}
    \put(10.5,30.5){$\widehat{P}_2$}
    \put(19,3.5){$\widehat{X}$}
    \put(99,-0.5){$X$}
    \put(81,25){$P$}
    \end{overpic}
    \caption{An embedded pair of pants $P\subset X$ that lifts to two disjoint pairs of pants $\widehat{P}_1,\widehat{P}_2 \subset \widehat{X}$ for every $2$-cover $\widehat{X}\to X$.}
    \label{pic_pants_lift}
\end{figure}

\section{Introduction}\label{intro}

In a series of breakthrough works \cite{MarcusSpielmanSrivastava_I,MarcusSpielmanSrivastava_IV}, Marcus, Spielman and Srivastava  proved that all finite graphs admit a cover of degree two whose adjacency matrix has no new eigenvalues above the spectral radius of their universal cover. This partially confirmed a conjecture by Bilu and Linial \cite{BiluLinial} and has since been generalized to all degrees \cite{HallPuderSawin}. Here, \textbf{new eigenvalues} are eigenvalues that do not come from lifts of the spectrum of the base graph. This, in particular, yielded the first proof of the existence of infinite sequences of bipartite Ramanujan graphs of every degree.

These results lead naturally to the following question: Does every hyperbolic surface of finite area admit a cover of degree two that has no new eigenvalues below $1/4$, the spectral gap of the hyperbolic plane? This question has been asked on multiple occasions (e.g. \cite{Sarnak_personal}, \cite[p. 12]{Hide_Thesis} and \cite{Magee_problem}) and its answer in the affirmative has been conjectured (e.g. \cite[Conjecture 1.6.2]{GolubevKamber}). If true, this would imply that there exist hyperbolic surfaces of arbitrarily large area with spectral gap at least $1/4$, which is currently not known.

Our first result answers this question in the negative. That is, the analog of Marcus, Spielman and Srivastava's theorem does not hold for hyperbolic surfaces:

\begin{thm}\label{badcoversthm}
    Let $(g,n)$ be such that either $g\geq 3$ and $n\geq 0$ or $g=2$ and $n\geq 1$. Then for any $\eps>0$, there exists a nonempty open set of surfaces $X\in \Mcal_{g,n}$ such that every double cover of $X$ has a new eigenvalue in $(0,\eps)$.
\end{thm}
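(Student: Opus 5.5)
Double covers of $X$ correspond to nonzero classes $\phi\in H^1(X;\Ztwo)$, i.e. surjections $\pi_1(X)\to\Ztwo$. New eigenvalues of $\widehat{X}_\phi\to X$ are eigenvalues of the Laplacian on odd functions (those with $f\circ\sigma=-f$ for the deck involution $\sigma$). Equivalently, they are eigenvalues of the twisted Laplacian on sections of the flat real line bundle $L_\phi$. So it suffices to construct, for every nonzero $\phi$, an odd function on $\widehat{X}_\phi$ with Rayleigh quotient less than $\eps$. By the min-max principle for the odd subspace, this gives a new eigenvalue in $(0,\eps)$. Positivity is automatic because odd functions cannot be constant, and $0$ is not a new eigenvalue when $\widehat{X}$ is connected. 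If the spectrum near $0$ is not discrete, for instance because of cusps, I would restrict to test functions supported away from the cusps and argue with the bottom of the essential spectrum, which is $1/4$.

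The geometric input is Figure~\ref{pic_pants_lift}. I want a pair of pants $P\subset X$ such that every $\phi$ restricts trivially to $\pi_1(P)$, so that $P$ lifts to two disjoint copies $\widehat{P}_1,\widehat{P}_2$ swapped by $\sigma$. In addition, I want the boundary curves of $P$ to be very short, of length $\delta$.

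Given such a $P$, the test function is straightforward. Take $f$ equal to $1$ on $\widehat{P}_1$ minus collars, $-1$ on $\widehat{P}_2$ minus collars, and $0$ outside. In the half-collars it decays to $0$ along the standard logarithmic cutoff, whose Dirichlet energy tends to $0$ as $\delta\to0$, while the $L^2$ norm stays bounded below by the area of the pants. This $f$ is odd by construction, and its Rayleigh quotient is less than $\eps$ once $\delta$ is small. Since the construction is stable under small perturbations of the lengths and twists, this yields a nonempty open set.

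The key step, and the main obstacle, is topological. I need an embedded pair of pants (or a subsurface of positive area) every one of whose boundary curves is a separating curve of $X$ or bounds a cusp. The restriction of $\phi$ to each boundary curve then vanishes, because separating curves are null-homologous mod $2$ and cusp loops are limits of such curves. However, this only shows $\phi$ vanishes on boundary loops, not on all of $\pi_1(P)$. For a pair of pants, though, $\pi_1(P)$ is generated by two boundary loops, so $\phi|_{\pi_1(P)}=0$ and $P$ lifts.

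It remains to produce a pants decomposition of the right type. I would take pants $P$ whose three boundary components $\gamma_1,\gamma_2,\gamma_3$ each separate $X$, each complementary component having genus $\geq1$ or being a once-punctured piece carrying a cusp. For $g\geq3$, cut $X$ along three separating curves that cut off three subsurfaces of genus $\ge1$ (with the punctures and remaining genus distributed among them). This requires $g\geq3$ when $n=0$. For $g=2,n\ge1$, use two genus-one pieces and let the third boundary curve cut off the punctures, or make it a cusp directly when $n=1$. Finally, pinch $\gamma_1,\gamma_2,\gamma_3$ to length $\delta$; since these are simple closed geodesics in a pants decomposition, Fenchel--Nielsen coordinates allow this. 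I would then check that the excluded cases (genus $\le1$, and $(2,0)$) are exactly those where no such pants exist, which is consistent with the hypotheses.
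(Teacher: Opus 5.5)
Your analytic part matches the paper's and is fine: an odd test function built from the two lifts of a pair of pants with short boundary, min-max on the odd subspace, and discreteness of the new spectrum below $1/4$. The gap is in the topological step. You require only that every boundary curve of $P$ be separating or peripheral, and this does not force $\phi$ to vanish on $\pi_1(P)$ once $X$ has at least two cusps.

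On a punctured surface, a separating curve that cuts off a set $J$ of punctures is homologous mod $2$ to $\sum_{j\in J}c_j$, where the $c_j$ are the peripheral classes. These satisfy only the single relation $c_1+\cdots+c_n=0$ in $H_1(X;\Ztwo)$. So a separating curve is null-homologous only if one of its sides contains no punctures, and a loop around a single cusp is a nonzero class as soon as $n\geq 2$. Saying that cusp loops are ``limits of such curves'' does not help.

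Concretely, take signature $(3,2)$ with one puncture in each of two of your genus-one pieces. Then $\gamma_1$ is homologous to $c_1\neq 0$. For any $\phi$ with $\phi(c_1)=1$ the preimage of $P$ is connected, so your odd test function cannot be built. Your closing consistency check also fails under your criterion. In $\Mcal_{0,4}$, a pair of pants bounded by two cusps and a curve separating them from the other two satisfies it. Yet Theorem~\ref{fourpuncturedthm} says every such surface has a double cover with no new eigenvalues below $1/4$. Indeed, the preimage of this $P$ is connected in the cover sending every $c_j$ to $1$.

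The condition you actually need, and the one the paper uses, is that $P$ be homologically trivial, $i_*(H_1(P;\Ztwo))=0$. Since every $\phi$ factors through $H_1(X;\Ztwo)$, this splits $P$ in all double covers at once. With cusps present, this means all $n$ cusps must lie in a single complementary component of $P$, for instance a genus-$0$ piece with one boundary curve and $n$ cusps, as in the paper. A cusp of $P$ itself is allowed only when $n=1$, where the peripheral loop bounds the compact remainder of the surface.

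Your explicit choices for $g=2$ already satisfy this. The repair is to put all punctures in one piece for $g\geq 3$ as well, and to replace ``separating'' by ``null-homologous mod $2$'' in your justification. With that change your claim about the excluded signatures also becomes correct, and your argument coincides with the paper's.
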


In the theorem above, $\Mcal_{g,n}$ denotes the moduli space of hyperbolic surfaces of signature $(g,n)$, equipped with its topology induced by Fenchel--Nielsen coordinates. We build our surfaces by pinching the boundary geodesics of a subsurface $Y\subset X$ whose inclusion map induces the trivial map $H_1(Y,\Zbb) \to H_1(X,\Zbb)$, as illustrated in Figure \ref{pic_pants_lift}. In these examples, the base $X$ itself also has small eigenvalues. In particular, it may still hold that every hyperbolic surface of finite area admits a double cover with no new eigenvalues below $\min\{\lambda_1(X),1/4\}$. Here $\lambda_1(X)$ denotes the first non-zero element of the spectrum of $X$. Note that if $X$ is not closed, then $\lambda_1(X)$ is not necessarily an eigenvalue.

Our method does not work for surfaces that do not admit a homologically trivial subsurface of negative Euler characteristic. This is not just an artifact of the proof. Indeed, we prove that for four punctured spheres, the conjecture \emph{does} hold:

\begin{thm}\label{fourpuncturedthm}
    Every surface $X\in \Mcal_{0,4}$ has a double cover $X'\in \Mcal_{1,4}$ with no new eigenvalues in $[0,1/4)$.
\end{thm}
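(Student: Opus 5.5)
The plan is to identify the double cover explicitly, reduce the statement to a lower bound on the ``odd'' spectrum, and prove that bound by a nodal-domain argument combined with domain monotonicity in elementary hyperbolic surfaces.

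\textbf{Step 1: the cover.} A double cover of $X\in\Mcal_{0,4}$ is given by a homomorphism $\phi:\pi_1(X)\to\Ztwo$. A cusp has two preimages if $\phi$ is trivial on its peripheral loop, and one preimage otherwise. The product of the four peripheral loops is trivial, so $\phi$ is non-trivial on an even number of them. A cover in $\Mcal_{1,4}$ has Euler characteristic $-4$ and exactly four cusps. This forces $\phi$ to send all four peripheral loops to $1$, and this $\phi$ is the one we take. Let $\sigma$ be the deck involution. New eigenfunctions are exactly the $\sigma$-odd ones. Equivalently, they are sections of the flat real line bundle $L\to X$ with monodromy $\phi$.

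Near each cusp the monodromy of $L$ is $-1$, so the Fourier expansion of an odd function in a cusp has no zero mode. All of its modes decay, so the odd part of the Laplacian has no continuous spectrum below $1/4$. It therefore suffices to show that every odd $L^2$ eigenfunction $f$ on $X'$ has eigenvalue $\lambda\geq 1/4$.

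\textbf{Step 2: nodal domains.} Let $\Omega\subset X'$ be a nodal domain of $f$ where $f>0$. Since $f\circ\sigma=-f$, we have $\Omega\cap\sigma\Omega=\emptyset$. Hence the covering map restricted to $\Omega$ is injective, and $\Omega$ is isometric to a domain $A\subset X$ on which $\phi$ restricted to $\pi_1(A)$ is trivial. Moreover $\lambda$ is the bottom of the Dirichlet spectrum of $\Omega$, and $f|_\Omega>0$. So it suffices to show that the bottom of the Dirichlet spectrum of any such $A$ is at least $1/4$.

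\textbf{Step 3: topology of $A$ (the main obstacle).} I will show that the image $H$ of $\pi_1(A)\to\pi_1(X)$ is cyclic and generated by a hyperbolic element, or is trivial. Since every peripheral loop has nontrivial monodromy, $H$ contains no parabolic elements. Suppose $H$ is non-cyclic. Then, after passing to a regular neighbourhood, $A$ contains an essential subsurface $P$ of negative Euler characteristic, and in particular a pair of pants. Its boundary curves are simple closed curves in $X$, each of which is either inessential, peripheral, or one of the $2|2$ separating curves.
\begin{itemize}
\item Peripheral boundary curves are excluded, since their monodromy is nontrivial.
\item Any two essential non-peripheral simple closed curves disjoint from each other in a four-punctured sphere are homotopic.
\end{itemize}
So the boundary of $P$ would consist of at most one isotopy class of essential curve together with inessential curves. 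This is incompatible with $P$ being an essential pair of pants, and rules out a homologically trivial subsurface of negative Euler characteristic, as the introduction anticipates. The care needed here is in handling a non-smooth nodal domain, for instance by exhausting it with smooth subdomains and checking that $\pi_1$-injectivity modulo inessential pieces works as claimed. I expect this to be the delicate point.

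\textbf{Step 4: conclusion.} Since $H$ is trivial or generated by a hyperbolic element, $A$ lifts isometrically into the quotient $\widetilde{X}_H=\mathbb{H}^2/H$. This quotient is either the hyperbolic plane or a hyperbolic cylinder of infinite area, and in both cases the bottom of its spectrum equals $1/4$. One can see this via Barta's inequality: for the cylinder use the positive function $u=(\cosh r)^{-1/2}$-type radial supersolution, with $r$ the distance to the core geodesic, and on $\mathbb{H}^2$ use $u=y^{1/2}$; in each case $-\Delta u\geq \tfrac14 u$. Domain monotonicity of Dirichlet eigenvalues then gives $\lambda\geq 1/4$. Together with Step 1, this shows that $X'$ has no new eigenvalues in $[0,1/4)$.
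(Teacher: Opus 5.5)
Your argument is correct. It shares the paper's choice of cover and its topological core: a set that lifts to two disjoint copies contains no peripheral loop, and since disjoint essential non-peripheral curves in a four-punctured sphere are homotopic, after filling in disks it is a disk or an annulus. The analytic part, however, is genuinely different.

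\textbf{The paper's route.} It proves a general double-cover Cheeger inequality $\lambda_1^{\varphi}\geq \frac14 h_{\varphi}(X)^2$ (Theorem \ref{dccheegerineq}). The proof runs the coarea argument on super-level sets of $|u_\varphi|$ for smooth compactly supported odd test functions $u_\varphi$. Sard's theorem makes these sets smoothly bounded (Lemma \ref{superadmissible}), which is how the paper avoids the regularity issue for non-smooth domains that you flag. It then gets $h_\varphi(X)\geq 1$ by combining the topological classification with the isoperimetric inequality $|\overline\partial A|>|A|$ for disks and annuli not homotopic to a cusp (Lemma \ref{diskandannulus}).

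\textbf{Your route.} You use nodal domains, lift the projected domain isometrically into the elementary quotient $\mathbb{H}^2/H$, and conclude by domain monotonicity and Barta's inequality with explicit positive supersolutions. This bypasses both the Cheeger inequality and the isoperimetric lemma. It also gets the constant $1/4$ directly as the bottom of the spectrum of $\mathbb{H}^2$ or a cylinder, rather than as $h^2/4$ with $h\geq 1$.

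\textbf{What each buys.} Your argument relies on every admissible domain having elementary image in $\pi_1(X)$. The paper's Cheeger framework is what gets reused for Theorem \ref{goodcoversurfacesthm}. There, admissible subsets can have non-elementary fundamental group, and only quantitative isoperimetric control is available.

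\textbf{A simplification.} As written, you control only eigenvalues and need the cusp Fourier analysis for the rest of the new spectrum. Instead, apply Step 2 to the positive part $u^+$ of an arbitrary smooth compactly supported odd function $u$. The set $\{u>0\}$ is disjoint from its $\sigma$-translate, and $u^+$ has the same Rayleigh quotient as $u$. This bounds the bottom of the whole new spectrum directly and removes the need for nodal domains altogether.

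\textbf{A small inaccuracy.} The fact that every peripheral loop $c$ has nontrivial monodromy does not by itself exclude parabolic elements from $H$, since $\phi(c^2)=0$. The conclusion is still true. By your classification, such an element would come from the core of an annulus, a simple closed curve with a parabolic power, hence homotopic to $c^{\pm1}$, on which $\phi$ is nontrivial. In any case the claim is not needed: $y^{1/2}$ is invariant under parabolic translations, so it also serves as a Barta function with $-\Delta u=\frac14 u$ on a parabolic cylinder.
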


We note that there exist four punctured spheres with an arbitrarily small spectral gap---obtained by pinching a geodesic separating the four cusps into two pairs---as well as four punctured spheres with spectral gap $1/4$---that can be obtained, for instance, as covers of degree two of the thrice punctured sphere (see \cite[p. 9]{Magee_limit_points}). 

We prove Theorem \ref{fourpuncturedthm} using a version of Cheeger's inequality adapted to the new spectrum of a double cover (Theorem \ref{dccheegerineq} below). For every double cover of a complete Riemannian manifold $X$, we introduce an isoperimetric constant. This is defined in a similar way to the classical Cheeger constant, but the infimum is now taken over subsets that lift to two disjoint copies of themselves in the cover. The \textbf{double cover Cheeger constant} $h_{\mathrm{DC}}(X)$ is the supremum of these isoperimetric constants over all covers of degree two of $X$. We have
\[
\sup\{\lambda_1^{\mathrm{new}}(Y):\; Y\to X \text{ a cover of degree }2\} \geq \frac{1}{4} h_{\mathrm{DC}}(X)^2,
\]
where we have written $\lambda_1^{\mathrm{new}}(Y)$ for the bottom of the new spectrum of $Y$, which again is a slight abuse of notation, because it may not be an eigenvalue. For more details, including a proper definition, we refer the reader to Section \ref{cheegersection}.

Besides Theorem \ref{fourpuncturedthm}, we also use the double cover Cheeger constant to prove that there do exist families of closed hyperbolic surfaces with double covers all of whose new eigenvalues are above $\frac{1}{4}$:

\begin{thm}\label{goodcoversurfacesthm}
    For any $g\geq 2$, $\Mcalg$ contains a nonempty open set of surfaces $X$  with $h_{\mathrm{DC}}(X)>1$ and thus surfaces $X$ that admit a double cover without new eigenvalues in $[0,1/4)$. 
\end{thm}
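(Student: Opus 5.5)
The plan is to combine the double cover Cheeger inequality (Theorem \ref{dccheegerineq}) with an explicit construction. Since
\[
\lambda_1^{\mathrm{new}}(Y)\ \geq\ \tfrac14 h(Y\to X)^2
\]
for each individual cover $Y\to X$, it is enough to exhibit, for every $g\geq 2$, one surface $X_0\in\Mcalg$ and one double cover $Y_0\to X_0$ whose isoperimetric constant satisfies $h(Y_0\to X_0)>1$. Then $h_{\mathrm{DC}}(X_0)>1$, so $\lambda_1^{\mathrm{new}}(Y_0)>1/4$.

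\textbf{Openness.} To obtain an open set rather than a single surface, I would show that $h(Y\to X)$ varies continuously, or at least lower semicontinuously, in Fenchel--Nielsen coordinates for a fixed topological cover, i.e.\ a fixed class in $H^1(X,\Ztwo)$. The tool is a $(1+\delta)$-bi-Lipschitz comparison map between nearby surfaces. Such a map distorts both area and boundary length by at most a factor $(1+\delta)^2$. It maps subsets that lift to two disjoint copies to subsets with the same property, because that property depends only on the homotopy class of loops. Hence the strict inequality $h>1$ persists on a neighbourhood of $X_0$.

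\textbf{Choice of $X_0$ and the cover.} I would use a surface that is "thick" and has one short nonseparating geodesic $\gamma$ of length $\ell\to 0$, with the cover $Y_0$ given by a class $\phi\in H^1(X_0,\Ztwo)$ with $\phi(\gamma)\neq 0$. Alternatively, I would choose $\phi$ nonzero on a family of short curves, arranged so that every essential region of definite area contains a loop on which $\phi$ is nontrivial.

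\textbf{Estimating $h$.} A subset $A$ lifting to two disjoint copies has $\phi|_{\pi_1(A)}$ trivial, so $A$ contains no loop with $\phi\neq 0$. I would then bound $\ell(\partial A)/\mathrm{area}(A)$ from below in three cases.
\begin{itemize}
\item If $A$ is contained in a disk or annulus, the hyperbolic isoperimetric inequality gives $\ell(\partial A)>\mathrm{area}(A)$. For disks this is $\ell^2\geq 4\pi A + A^2$, which yields ratio greater than $1$. For annuli, the thin annulus about $\gamma$ gives trouble, but its core curve has $\phi(\gamma)\neq 0$, so $A$ cannot contain a homotopically nontrivial loop around it.
\item If $A$ has large topology, it must avoid "wrapping around" the curves where $\phi$ is nontrivial. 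It is therefore confined to a complement whose own isoperimetric behaviour I control.
\end{itemize}

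\textbf{Main obstacle.} The hard part is the second case: controlling $\ell(\partial A)/\mathrm{area}(A)$ for subsets of large area and nontrivial topology that still kill $\phi$. The natural attempt is to cut $X_0$ along $\phi$-dual curves, i.e.\ lift to the cover and consider $A$ as one sheet. In a closed hyperbolic surface the classical Cheeger constant is at most $1$ asymptotically, so the point is that the constraint $\phi|_A=0$ forbids $A$ from being close to half of $X_0$.

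My first attempt would take $X_0$ built from pants with very short cuffs, all of which are $\phi$-nontrivial. This forces $A$ into pieces of single pants minus collars, with boundary forced along long collar crossings. I would then check via the collar lemma that the boundary length across each collar exceeds the area enclosed. The delicate point is making the constant strictly exceed $1$ uniformly, which I would handle by an explicit computation in the pants and the collars.
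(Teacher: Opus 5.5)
Your architecture is the right one: the double cover Cheeger inequality, a surface with a very short pants decomposition, and collar crossings being expensive. But the concrete construction you commit to cannot be carried out, and the topological input that makes the argument work is missing. Your abstract condition, that every essential region contain a $\phi$-nontrivial loop, is correct, but you never show it can be met. The concrete version, ``pants with very short cuffs, all of which are $\phi$-nontrivial'', is impossible. The boundary curves of any pair of pants sum to $0$ in $H_1(X;\Ztwo)$, so $\phi$ is nonzero on an even number of them (counted with multiplicity), and separating cuffs are always $\phi$-trivial. The correct requirement is that $\phi$ be nonzero on the image of $H_1(P)\to H_1(X)$ for \emph{every} pants $P$ of the decomposition.

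Whether such a $\phi$ exists depends on the topological type of the decomposition. For $g\geq 3$ some decompositions contain a homologically trivial pair of pants, which no $\phi$ detects. Once its cuffs are short, every double cover then has a small new eigenvalue by Lemma \ref{smallpantssmallevals}; this is Theorem \ref{badcoversthm}. So the choice of decomposition is the heart of the proof, not a detail. The paper uses the linear decomposition of Figure \ref{pic_thinsurf}:
the $\gamma_k$ are separating, each $\alpha_k'$ is homologous to $\alpha_k$, and $a_1,\dots,a_g$ span a Lagrangian. Every pants has some $\alpha_k$ or $\alpha_k'$ as a cuff, so any $\varphi$ with $\varphi(a_k)=1$ for all $k$ is nonzero on every $H_1(P_i)$. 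Your other option, a thick surface with one short curve, has no such mechanism. Nothing stops admissible subsurfaces of the thick part from having short boundary; compare Proposition \ref{doublecheegerbolza}.

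The step you call delicate also needs no explicit computation inside pants and collars. Suppose a reduced admissible $A$ has $|\Bpartial A|\leq |A|$; then $|\Bpartial A|\leq |X|=4\pi(g-1)$. If all cuffs are shorter than $L(g)=2\,\mathrm{arcsinh}\big(1/\sinh(2\pi(g-1))\big)$, every collar has half-width greater than $2\pi(g-1)$. No boundary curve of $A$ is null-homotopic, since $A$ is reduced and not a disk. So a boundary curve not homotopic to a cuff would have to traverse a whole collar and be longer than $4\pi(g-1)$, which is too long. Hence every boundary curve is homotopic to a cuff, and $A$ is isotopic to a union of pants and annuli of the decomposition. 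Disks and annuli have ratio $>1$ by Lemma \ref{diskandannulus}, so the thin annulus around a short curve causes no trouble at all on a closed surface. Otherwise $A$ contains some $P_i$ up to isotopy, contradicting admissibility. So $A$ is not ``confined to single pants minus collars''; ratio $\leq 1$ is ruled out altogether. This argument applies verbatim to every surface with all cuffs shorter than $L(g)$, which is an open set, so openness is automatic. Your bi-Lipschitz semicontinuity argument is correct but unnecessary.
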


The surfaces $X$ we produce have small eigenvalues themselves ($2g-2$ of them in fact), so the theorem above does not lead to infinite sequences of surfaces with a spectral gap of more than $\frac{1}{4}$. We also note the contrast with the classical Cheeger constant that cannot be used to produce closed surfaces of high genus with an optimal spectral gap (see \cite{BudzinskiCurienPetri}).

Finally, we observe that there do exist surfaces \emph{none} of whose non-trivial covers of degree two have new small eigenvalues, yet whose double cover Cheeger constant is below $1$. The Bolza surface, the most symmetric closed and orientable hyperbolic surface of genus $2$, has this property. It was proved in \cite[Section 3]{HidePetri} that none of the nontrivial degree-two covers of this surface have small eigenvalues, and we prove the following bound in Section \ref{bolzasection} below:
\begin{prop}\label{doublecheegerbolza}
The Bolza surface $X_{\mathrm{B}}$ satisfies
\[
h_{\mathrm{DC}}(X_{\mathrm{B}}) \leq 2\;\mathrm{arccosh}\left(1+\sqrt{2}\right)/\;\pi = 0.973118\ldots<1.
\]
\end{prop}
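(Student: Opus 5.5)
The bound is an upper bound on a supremum over the double covers of $X_{\mathrm{B}}$, so I have to treat every cover. The double covers of a closed surface $X$ are indexed by nonzero classes $\phi\in H^1(X_{\mathrm{B}},\Ztwo)$; there are $2^4-1=15$ of them. For each $\phi$ the plan is to exhibit a subset $A_\phi\subset X_{\mathrm{B}}$ that lifts to two disjoint copies of itself in the cover, and whose isoperimetric ratio is at most $2\,\mathrm{arccosh}(1+\sqrt2)/\pi$. Lifting to two copies means every loop in $A_\phi$ satisfies $\phi=0$. I take the isoperimetric ratio to be $\ell(\partial A)/\min\{\mathrm{area}(A),\mathrm{area}(X\setminus A)\}$, or whatever normalization Section \ref{cheegersection} uses, adjusted accordingly.

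The automorphism group of $X_{\mathrm{B}}$ (order $48$, or $96$ with reflections) acts on $H^1(X_{\mathrm{B}},\Ztwo)\setminus\{0\}$, and I would first split the $15$ classes into orbits. Suppose $A$ works for $\phi$ and $g$ is an automorphism. Then $g(A)$ works for $g\cdot\phi$ with the same ratio. So it suffices to build one set $A$ per orbit. I expect only two or three orbits, distinguished for instance by how $\phi$ evaluates on the systoles of $X_{\mathrm{B}}$, which are the $12$ curves of length $2\,\mathrm{arccosh}(1+\sqrt2)$.

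The number $2\,\mathrm{arccosh}(1+\sqrt2)$ is exactly the systole length of the Bolza surface, and the area of $X_{\mathrm{B}}$ is $4\pi$. This suggests the following. For a given $\phi$, choose a systole $\gamma$ with $\phi(\gamma)=1$; the goal is then to find a subset $A$ with boundary of length about one systole and area $2\pi$, i.e.\ half the surface. Indeed, $\ell(\gamma)/(2\pi)$ would give the ratio $\mathrm{arccosh}(1+\sqrt2)/\pi\approx 0.49$, which is too good. The target $2\ell(\mathrm{sys})/(2\pi)$ means the boundary is two systole-length pieces around a region of area $2\pi$.

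Concretely, I would use two disjoint systoles $\gamma_1,\gamma_2$ that together separate $X_{\mathrm{B}}$ into two one-holed... Two disjoint nonseparating curves that jointly separate a genus-$2$ surface cut it into two pairs of pants, each of area $2\pi$. The condition I need is then that $\phi$ restricted to one pants $P$ vanishes. A pants has $H_1$ generated by its boundary curves, so this means $\phi(\gamma_1)=\phi(\gamma_2)=0$. Actually $\gamma_1$ and $\gamma_2$ are homologous, since together they bound $P$, so a single condition suffices: $\phi(\gamma_1)=0$. I am fairly confident this also handles pants that are only immersed rather than embedded.

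So the plan reduces to a combinatorial check on the Bolza surface. For each nonzero $\phi$, find a pair of disjoint homologous systoles $\gamma_1,\gamma_2$ with $\phi(\gamma_1)=0$. Each such pair determines a class $[\gamma_1]\in H_1(X_{\mathrm{B}},\Ztwo)$, and $\phi$ kills it for any $\phi$ in a $3$-dimensional subspace. So I need the set of such classes not to be contained in a single $\phi$-annihilator. It suffices to find two pairs with distinct nonzero homology classes $c_1\neq c_2$; then no single $\phi$ is nonzero on both... that is not quite right. I actually need, for each $\phi$, some class $c$ in the set with $\phi(c)=0$. Since $\phi^{\perp}$ has $7$ nonzero elements out of $15$, a set of at least $9$ distinct classes intersects every $\phi^{\perp}$.

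The main obstacle is this enumeration: listing the disjoint homologous systole pairs on $X_{\mathrm{B}}$ (using the octagon model with side pairings, where systoles join midpoints of opposite or adjacent sides) and verifying that their homology classes meet every hyperplane $\phi^{\perp}$. Symmetry reduces the work to checking one representative $\phi$ per orbit. Once a valid pants $P$ is found for every $\phi$, each cover's isoperimetric constant is at most $2\ell_{\mathrm{sys}}/(2\pi)$, and hence so is $h_{\mathrm{DC}}$.
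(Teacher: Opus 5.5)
\textbf{There is a genuine gap.} Your construction breaks down where you cut $X_{\mathrm B}$ along two disjoint homologous systoles, because that decomposition does not exist. Suppose $\gamma_1,\gamma_2$ are disjoint and non-separating, but $\gamma_1\cup\gamma_2$ separates. Then each complementary piece has boundary $\gamma_1\sqcup\gamma_2$, so the pieces have Euler characteristics $-2g_1$ and $-2g_2$ with $g_1+g_2=1$. One piece is therefore an annulus, so $\gamma_1$ and $\gamma_2$ are isotopic and, being geodesics, equal. Two pairs of pants would need six boundary circles, i.e.\ three cutting curves. Even then, $\varphi$ vanishing on the image of $H_1(P)$ is two conditions, not one.

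So there are no pairs of distinct disjoint homologous systoles on $X_{\mathrm B}$, and the enumeration you plan has nothing to enumerate. The half-area strategy is not easily rescued either. With $\ell_{\mathrm{sys}}=2\,\mathrm{arccosh}(1+\sqrt2)$, a pair of pants cut out by three disjoint systoles has ratio $3\ell_{\mathrm{sys}}/(2\pi)\approx 1.46$. A one-holed torus is bounded by a separating geodesic, which on $X_{\mathrm B}$ has length at least $2\,\mathrm{arccosh}(11+8\sqrt2)\approx 7.60$ (commutator trace identity with all three interior traces at least $2+2\sqrt2$). That gives a ratio of about $1.21$.

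The missing idea is the normalization actually used in Section~\ref{cheegersection}. The ratio is $|\overline\partial A|/|A|$, with no minimum against the complement. Moreover, $|\overline\partial A|$ is measured in the metric completion of $A$, so a curve you cut along is counted twice. Given a nontrivial $\varphi$, Poincar\'e duality gives $a\in H_1(X_{\mathrm B})$ with $\varphi=i(a,\cdot)$. Let $\alpha$ be a simple closed multi-geodesic representing $a$. Every loop in $A=X_{\mathrm B}\setminus\alpha$ has zero intersection with $\alpha$, so $A$ is $\varphi$-admissible, with $|A|=4\pi$ and $|\overline\partial A|=2\ell(\alpha)$.

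What remains is a check on the octagon: each of the $15$ nonzero classes is represented by such an $\alpha$ of length at most $2\ell_{\mathrm{sys}}$. Twelve classes are represented by a single systole and the remaining three by two. This gives $h_\varphi(X_{\mathrm B})\le 4\ell_{\mathrm{sys}}/(4\pi)=2\,\mathrm{arccosh}(1+\sqrt2)/\pi$ for every $\varphi$.
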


Together with Theorems \ref{badcoversthm} and \ref{dccheegerineq}, this proposition also implies that for every $g\geq 2$, $\Mcal_g$ contains a surface with double cover Cheeger constant below $1$.

We will finish this introduction with some further notes and references, but first we ask the reader to answer the questions that we did not answer. Indeed, Theorems \ref{badcoversthm} and \ref{fourpuncturedthm} above leave the question of whether every surface has a degree-two cover without small new eigenvalues open for infinitely many signatures:
\begin{quest}
    Does the statement of Theorem \ref{badcoversthm} extend to
    \begin{itemize}
        \item[(a)] surfaces of genus $0$ with at least $5$ cusps? 
        \item[(b)] surfaces of genus $1$?
        \item[(c)] closed surfaces of genus $2$?
    \end{itemize}
\end{quest}

Our theorems also fall short of answering the question of whether there exist hyperbolic surfaces of arbitrarily large area with spectral gap at least $1/4$. An affirmative answer to the following question would yield an affirmative answer to this question as well.

\begin{quest}
    Does the following hold for every finite-area hyperbolic surface $X$:
    \[\max\{\lambda_1^{\mathrm{new}}(Y):\; Y\to X \text{ a cover of degree }2\}\geq \min\{\lambda_1(X),1/4\}?\]
\end{quest}

\subsection{Further notes and references}

A lot of recent progress on the spectral geometry of hyperbolic surfaces was inspired by graph theory. One particularly fruitful source of inspiration is Friedman's theorem. Friedman proved that random high-degree covers of a wedge of at least two circles have a near-optimal spectral gap \cite{Friedman}, confirming a conjecture by Alon \cite{Alon}. This was later reproved and generalized to all finite base graphs in \cite{Bordenave, BordenaveCollins,ChenGarzaVargasTroppvanHandel}. Moreover, Huang, McKenzie and Yau \cite{HuangMcKenzieYau} recently proved that the renormalized distribution of the spectral gap converges to a Tracy-Widom distribution, which proves the existence of infinite sequences of Ramanujan graphs of any fixed degree.

The natural analog for hyperbolic surfaces of Friedman's theorem was proven by Hide and Magee \cite{HideMagee}: a random high-degree cover of a non-compact hyperbolic surface of finite area has a near-optimal relative spectral gap. More recently this was extended to covers of closed surfaces \cite{MageePudervanHandel,HideMaceraThomas1}. Some work also has been done to find necessary geometric conditions for Riemannian surface covers to lack new small eigenvalues \cite{BallmannMondal2025}. Besides random covers, analogous results are known for many models of random surfaces of large area \cite{AnantharamanMonk,HideMaceraThomas2,ShenWu}. The analog of Huang, McKenzie and Yau's theorem is still not known, but proving it for any of the models would lead to sequences of hyperbolic surfaces of arbitrarily large area without eigenvalues in $[0,\frac{1}{4}]$.

Finally, we note that it was recently proved that the full Bilu--Linial conjecture is false in general for graphs \cite{Xu}. Like in our case, the counterexample is a base graph with a very small spectral gap. Besides that, the counterexample is different because a connected subgraph is trivial in homology if and only if it is a subtree.

\subsection{Statement on the usage of artificial intelligence}

The mathematics for this paper was done in February and March 2026, when the machine (at least the version we had access to) was not yet a good enough mathematician to prove our theorems. As such, all the mathematical ideas are human. Nonetheless, we used ChatGPT for literature search and to check and test some computations.

\subsection{Acknowledgments}

Both of us thank  the Isaac Newton Institute for Mathematical Sciences, Cambridge, for support and hospitality during the programme \textit{Geometric spectral theory and applications}, where work on this paper was undertaken. This work was supported by EPSRC grant no EP/Z000580/1. LH thanks the Institut Universitaire de France for financing a trip to Paris and the Institut de Mathématiques de Jussieu--Paris Rive Gauche for its hospitality when some of the work for this paper was done. BP thanks the Simons Institute for the Theory of Computing, Berkeley, where, during the program \textit{Spectral Theory Beyond Graphs}, some of the writing for this paper was done.

\section{Proof of Theorem \ref{badcoversthm}}

The idea of the proof of Theorem \ref{badcoversthm} is to find a surface with a pair of pants with all boundary geodesics very short and which has a disconnected pre-image under every double cover. The result then follows from a simple test function argument. We work out the details in this section.

Given $\ell_1,\ell_2,\ell_3>0$, let $P(\ell_1,\ell_2,\ell_3)$ denote a hyperbolic pair of pants with boundary components $\gamma_1,\gamma_2,\gamma_3$ of length $\ell_1,\ell_2,\ell_3$, respectively. For $\ell>0$, let \[w(\ell)={\rm arcsinh}\bigg(\frac{1}{\sinh(\ell/2)}\bigg).\] For $i\in \{1,2,3\}$, let \[\Ccal_i:=\big\{x\in P\mid d(x,\gamma_i)<w(\ell_i)\big\},\] where $d$ denotes geodesic distance, denote the collar neighborhood of $\gamma_i$. By the collar theorem (see \cite{Keen} and \cite[Chapter 4]{Buser_book}), the sets $\Ccal_i$ are mutually disjoint topological cylinders. Each collar has area at most $2$ while $P$ has area equal to $2\pi$. Thus, the complement of the collars has area at least $2\pi-6$. \\
\indent If some of the $\ell_i$ equal $0$, then we define $P(\ell_1,\ell_2,\ell_3)$ similarly except with a cusp in place of a boundary curve for the corresponding $\ell_i$. The cusps also have a collar neighborhood, and the area estimate above generalizes to this case; see \cite[Section 4.4]{Buser_book}.

Given a double cover $p:S'\to S$, let $\lambda_1^{{\rm new}}(S')$ be the least element of the new spectrum of $S'$.

\begin{lem}\label{smallpantssmallevals}
    Let $\ell_1,\ell_2,\ell_3 \geq 0$, and suppose that $S$ is a hyperbolic surface that contains an isometric copy of the pair of pants $P=P(\ell_1,\ell_2,\ell_3)$. Let $p:S'\to S$ be a double cover of $S$ such that $p^{-1}(P)$ has two connected components, each isometric to $P$. Then 
    \[\lambda_1^{{\rm new}}(S')\leq \frac{2\pi}{(2\pi-6)\cdot \min\limits_{i=1,2,3:\;\ell_i\neq 0} \{w(\ell_i)^2\}},\]
    where we have used the convention that the minimum of the empty set equals $\infty$.
\end{lem}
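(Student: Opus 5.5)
The plan is to build an explicit test function that is odd under the deck involution and then apply the min-max principle to the new spectrum. Let $\sigma:S'\to S'$ be the nontrivial deck transformation. The new spectrum of $S'$ is the spectrum of the Laplacian restricted to the $\sigma$-odd functions, i.e.\ those with $f\circ\sigma=-f$. Lifts of functions on $S$ are exactly the even functions, and $L^2(S')$ splits orthogonally into even and odd parts, both invariant under the Laplacian. Hence
\[
\lambda_1^{\mathrm{new}}(S') = \inf_{f \text{ odd},\, f\neq 0} \frac{\int_{S'}|\nabla f|^2}{\int_{S'} f^2},
\]
with the infimum over odd $H^1$ functions. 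It suffices to exhibit one odd Lipschitz function whose Rayleigh quotient is at most the stated bound.

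Write $p^{-1}(P)=\widehat P_1\sqcup\widehat P_2$. Since $\sigma$ has no fixed points and preserves $p^{-1}(P)$, it swaps $\widehat P_1$ and $\widehat P_2$. Let $m=\min_{i:\ell_i\ne 0} w(\ell_i)$. Define $\phi$ on $P$ by
\[
\phi(x)=\min\{1,\ d(x,\partial P)/m\},
\]
where $\partial P$ consists only of the geodesic boundary components, since cusps are not boundary. Then $\phi$ vanishes on $\partial P$, is $1/m$-Lipschitz, and satisfies $\phi\equiv 1$ outside the union of the collars $\Ccal_i$ with $\ell_i\neq 0$. Indeed, a point at distance less than $m\le w(\ell_i)$ from $\gamma_i$ lies in $\Ccal_i$. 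If no $\ell_i$ is nonzero, take $\phi\equiv 1$, so that $P=S$ is a thrice-punctured sphere and we get a compactly supported, well, finite-area, odd constant function with gradient zero; this matches the convention that the bound is $0$.

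Transport $\phi$ to $\widehat P_1$ via the isometry $p|_{\widehat P_1}$ and set $f=\phi\circ p$ on $\widehat P_1$, $f=-\phi\circ p$ on $\widehat P_2$, and $f=0$ elsewhere. Since $\phi$ vanishes on $\partial P$, the function $f$ is Lipschitz on $S'$, hence lies in $H^1$. It satisfies $f\circ\sigma=-f$ because $p\circ\sigma=p$ and $\sigma$ swaps the two components.

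We then estimate the Rayleigh quotient. For the gradient term, $|\nabla\phi|\le 1/m$ almost everywhere, so
\[
\int_{S'}|\nabla f|^2\le 2\,\mathrm{area}(P)/m^2 = 4\pi/m^2.
\]
Using the full area of $P$ here is crude but sufficient. For the $L^2$ term, $\phi\equiv1$ on the complement of the collars, whose area is at least $2\pi-6$ by the collar estimate recalled above, including the cusp case. Hence
\[
\int_{S'}f^2\ge 2(2\pi-6).
\]
The ratio is at most $2\pi/((2\pi-6)m^2)$, as claimed.

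The only delicate point is the identification of the new spectrum with the spectrum on odd functions when $S$ is noncompact and the spectrum may be partly continuous. I would handle this by defining $\lambda_1^{\mathrm{new}}$ as the bottom of the spectrum of the Laplacian on the odd subspace, which is a closed, Laplacian-invariant complement of the lifted functions. The variational characterization of the bottom of the spectrum then applies directly, and $f$ is a legitimate test function because it has finite area support and is Lipschitz.
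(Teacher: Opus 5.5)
Your proposal is correct and follows the paper's proof. Your $\phi$ is the paper's test function $\min\{d(x,\partial P_i),W\}$ divided by $W=m$, which leaves the Rayleigh quotient unchanged, so your bounds $\int|\nabla f|^2\le 4\pi/m^2$ and $\int f^2\ge 2(2\pi-6)$ are the paper's estimates rescaled, and your $\pm1$ function for the thrice-punctured-sphere case is exactly the paper's.
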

\begin{proof}
    If all three boundary lengths are $0$, then $P$ is a thrice punctured sphere and thus forms a connected component of $S$. Our assumption then implies that $P$ lifts to two disconnected copies of itself in $S'$. So then $\lambda_1^{\rm new}(S')=0$, corresponding to the eigenfunction that equals $1$ on the first copy of $P$ in $S'$ and $-1$ on the other.

    Now we assume that at least one of the three boundary lengths is non-zero. By the variational characterization of the eigenvalue problem, we need only construct a function $u\in H^1(S')$ that is odd with respect to the non-trivial element of the deck group of the cover and with Rayleigh quotient less than the right-hand side. Let $P_1, P_2$ denote the connected components of $p^{-1}(P)$. Set $W=\min\limits_{i=1,2,3:\;\ell_i\neq 0} \{w(\ell_i)\}$. Then let \[u(x)=\begin{cases}
        \min\{d(x,\partial P_1),W\}\;\;&\text{if}\;\;x\in P_1\\
        -\min\{d(x,\partial P_2),W\}&\text{if}\;\;x\in P_2\\
        0&\text{otherwise.}
    \end{cases}\]
    Then $u\in H^1(S')$ with $|\nabla u| \in \{0,1\}$ almost everywhere in $P_1\cup P_2$ and with $|\nabla u|=0$ outside of $P_1\cup P_2$. It is also clear that $u$ is odd with respect to the non-trivial element of the deck group. By the above discussion, each $P_i$ contains a set of area at least $2\pi-6$ that is distance at least $W$ from $\partial P_i$. From these facts, we get 
    \[\int_{S'}|\nabla u|^2 \leq |P_1\cup P_2| = 4\pi\] and \[\int_{S'}|u|^2>2\cdot (2\pi-6)\cdot W^2.\] Dividing these computations yields the estimate.
\end{proof}

Note that as $\min_{\ell_i\neq 0} \{\ell_1,\ell_2,\ell_3\}$ approaches zero, the right-hand side of the estimate in Lemma \ref{smallpantssmallevals} also approaches zero. \\
\indent We now construct surfaces whose double covers all contain such a pair of pairs of pants. Let $P\subseteq S$ be a pair of pants, and let $i:P\hookrightarrow S$ denote the inclusion map. Given a double cover $p:S'\to S$ defined by a homomorphism $\phi:\pi_1(S)\to \Ztwo$, the inclusion $i$ lifts to a map $i':P\hookrightarrow S'$ if and only if \[i_*(\pi_1(P))\subseteq p_*(\pi_1(S'))=\ker(\phi).\] Note that $\phi$ factors through a map $\tilde{\phi}:H_1(S)\to \Ztwo$, where $H_1(S)$ is the first homology group of $S$ with coefficients in $\Ztwo$. Therefore, if \[i_*(H_1(P))=\{0\}\subseteq H_1(S),\] then $p^{-1}(P)$ has two connected components, each isometric to $P$, for every double cover. With this in hand, we can now prove Theorem \ref{badcoversthm}, which we first repeat for the convenience of the reader:
\begin{thmrep}{\ref*{badcoversthm}}
    Let $(g,n)$ be such that either $g\geq 3$ and $n\geq 0$ or $g=2$ and $n\geq 1$. Then for any $\eps>0$, there exists a nonempty open set of surfaces $X\in \Mcal_{g,n}$ such that every double cover of $X$ has a new eigenvalue in $(0,\eps)$.
\end{thmrep}

\begin{proof}
    Let $(g,n)$ be one of the signatures listed in the theorem and $\epsilon>0$. The main observation is that a surface of this signature contains a homologically trivial pair of pants, i.e. a pair of pants such that $i_*(H_1(P)) = \{0\}$. 
    
    Indeed, if $n= 0$ (and thus by assumption $g\geq 3$), choose $g_1,g_2,g_3>0$ such that $g_1+g_2+g_3=g$ and attach compact surfaces $S_1,S_2,S_3$ with one boundary component each and genera $g_1,g_2,g_3$, respectively, to the three boundary components of a compact pair of pants $P$. In the resulting surface $S$, $P$ is homologically trivial. If $n>1$, then we can do essentially the same thing, replacing $S_3$ by a surface of genus $0$, with one boundary component and $n$ cusps (and this works as soon as $g\geq 2$). If $n=1$, we start with a pair of pants $P$ with one cusp and two boundary components instead of a compact pair of pants.

    We now equip $S$ with any complete hyperbolic metric such that the induced metric on $P$ is isometric to $P(\ell_1,\ell_2,\ell_3)$, chosen such that \[\frac{2\pi}{(2\pi-6)\cdot \min\limits_{i=1,2,3:\;\ell_i \neq 0}\{w(\ell_i)^2\}}<\min\{1/4,\epsilon\}.\] 
    The set of possible choices of metrics induces an open subset of $\Mcal_{g,n}$.

    By the discussion above, for any double cover $p:S'\to S$, the pre-image $p^{-1}(P)$ has two connected components each isometric to $P$. The theorem then follows from Lemma \ref{smallpantssmallevals} and the fact that the spectrum of such a surface in $[0,1/4)$ consists of eigenvalues.
\end{proof}

\section{The double cover Cheeger constant}\label{cheegersection}

In this section we develop a version of Cheeger's inequality adapted to double covers. Because hyperbolic geometry and two-dimensionality do not play a role in this, we will briefly let go of both and let $X$ be a complete Riemannian manifold. 

%In what follows, every (co)homology group is taken to be over $\Zbb/2\Zbb$.\\
%\indent There is a correspondence between elements of ${\rm Hom}(H_1(X),\Ztwo)$ and the set of (Riemannian) double covers of $X$. Indeed, by \LH{cite hatcher}, there is a correspondence between the set of double covers of $X$ and index $2$ subgroups of $\pi_1(X)$, where the subgroup corresponding to a given cover is the image of the pushforward of its fundamental group by the covering map. Since index $2$ subgroups are normal, these subgroups are in bijection with homomorphisms $\pi_1(X)\to \Ztwo$. Since $\Ztwo$ is abelian, such homomorphisms factor through the abelianization and$\mod 2$ reduction $\pi_1(X)\to H_1(X)$, establishing the correspondence.\\
%\indent In the special case that $X$ is a compact Riemannian surface, the algebraic intersection form $i:H_1(X)\times H_1(X)\to \Ztwo$ defines a further bijection between $H_1(X)$ and ${\rm Hom}(H_1(X),\Ztwo)$ given by $a\mapsto i(a,\cdot )$. Given $a \in H_1(X)$, the double cover defined by this correspondence can be realized geometrically as follows. Let $\alpha$ be a simple closed curve representing $a$. Let $X_1$ and $X_2$ be two isometric copies of $X$. Cut each $X_i$ along its copy of $\alpha$, and glue $X_1$ to $X_2$ along these cuts in the orientation-preserving way to obtain a closed Riemannian surface. The resulting surface is the double cover. 

Let $H_1(X)$ denote the first homology group of $X$ with coefficients in $\Ztwo$. Given an element $\varphi\in {\rm Hom}(H_1(X),\Ztwo)$, denote the corresponding double cover by $p=p_{\varphi}:X_{\varphi}\to X$. Let $\sigma:X_{\varphi}\to X_{\varphi}$ denote the nontrivial element of the deck group of the cover, and let \[L^2_{\varphi}(X_{\varphi}):=\{u\in L^2(X_{\varphi})\mid \sigma^*u=-u\}.\] 
The restriction of the Laplacian on $X_{\varphi}$ to $H^2(X_{\varphi})\cap L^2_{\varphi}(X_{\varphi})$ defines a self-adjoint operator, and we call its spectrum the \textbf{new spectrum} of $X_{\varphi}$. We call the eigenvalues in the new spectrum the \textbf{new eigenvalues} of $X_{\varphi}$. In the case that $X$ (resp. $X_{\varphi}$) is not compact, the set of new eigenvalues may be empty. We denote the infimum of the new spectrum by $\lambda_1^{\varphi}$. Note that, by the min-max theorem, \begin{equation}\label{minmaxnew}\lambda_1^{\varphi}=\inf_{u\in L^2_{\varphi}(X_{\varphi})\cap C^{\infty}_0(X_{\varphi})\setminus \{0\}}\frac{\int_{X_{\varphi}}|\nabla u|^2}{\int_{X_{\varphi}}|u|^2}.\end{equation}\\
\indent We now show that there exists a Cheeger-type inequality for $\lambda_1^{\varphi}$. We say that a nonempty open subset $A\subseteq X$ is \textbf{$\varphi$-admissible} if $\partial A$ is a smooth hypersurface and either of the following equivalent conditions holds: 
\begin{enumerate}
    \item For each connected component $A'\subseteq A$, the pre-image $p^{-1}(A')$ is disconnected.
    \item If $\iota_A:A\hookrightarrow X$ denotes the inclusion map, then the image $(\iota_A)_*(H_1(A))\subseteq H_1(X)$ lies in the kernel of $\varphi$. 
\end{enumerate}
\indent For instance, if $A$ has trivial first homology, then $A$ is $\varphi$-admissible for any $\varphi$. Moreover, a smoothly bounded, nonempty open subset of a $\varphi$-admissible set is $\varphi$-admissible. For a less trivial example, suppose that $X$ is a closed Riemannian surface and that $\varphi$ is defined to be $i(a,\cdot)$ for some $a\in H_1(X)$, where $i(\cdot,\cdot)$ denotes the (mod $2$) intersection form on $H_1(X)$. Then a tubular neighborhood of a simple closed curve $\alpha$ representing $a$ is $\varphi$-admissible, and the set $X\setminus \alpha$ is also $\varphi$-admissible.   \\
\indent Given a smoothly bounded open subset $A\subseteq X$, let $|\Bpartial A|$ denote the hypersurface area of the boundary of the abstract metric completion of $A$. By the last example in the preceding paragraph, it is not always the case that $|\Bpartial A|$ equals $|\partial A|$. The \textbf{$\varphi$-Cheeger constant} of $X$ is then 
\[h_{\varphi}(X):=\inf\bigg\{\frac{|\Bpartial A|}{|A|}\mid A\subseteq X\;\;\varphi\text{-admissible}\bigg\}.\] 
The \textbf{double cover Cheeger constant} of $X$ is defined as \[h_{\mathrm{DC}}(X):=\sup_{\varphi\in{\rm Hom}(H_1(X),\Ztwo)}h_{\varphi}(X).\] 
\begin{remk}\label{existenceremark}
    Using the discreteness of the length spectrum of a hyperbolic surface, one can show that below any fixed threshold $L>0$, there are at most finitely many homotopy classes of $\varphi$-admissible subsets with isoperimetric ratio at most $L$. Applying a standard Arzel\`a-Ascoli argument to the boundary curves of a minimizing sequence of $\varphi$-admissible subsets then implies the existence of a minimizing subset for the $\varphi$-Cheeger constant. In particular, the double cover Cheeger constant of a hyperbolic surface is realized by some subset.
\end{remk}

The double cover Cheeger inequality now reads:

\begin{thm}\label{dccheegerineq}
    For any complete Riemannian manifold with a double cover defined by $\varphi\in {\rm Hom}(H_1(X),\Ztwo)$, \[\lambda_1^{\varphi}\geq \frac14\cdot h_{\varphi}(X)^2.\] In particular, \[\sup\{\lambda_1^{\mathrm{new}}(Y):\; Y\to X \text{ a cover of degree }2\}\geq \frac14\cdot h_{DC}(X)^2.\]
\end{thm}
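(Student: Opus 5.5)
Given an odd test function $u\in L^2_{\varphi}(X_{\varphi})\cap C^\infty_0(X_{\varphi})$, I will run the classical Cheeger argument, but applied to $|u|$ viewed as a function on the base. The starting point is that, since $\sigma^*u=-u$, the function $f:=u^2$ is $\sigma$-invariant, so it descends to a compactly supported Lipschitz function $\bar f$ on $X$. Likewise $|u|$ descends to $\bar v$ with $\bar f=\bar v^2$. The corresponding integrals over $X_\varphi$ are exactly twice those over $X$:
\[
\int_{X_\varphi}|u|^2=2\int_X \bar v^2, \qquad \int_{X_\varphi}|\nabla u|^2=2\int_X|\nabla \bar v|^2,
\]
using $|\nabla|u||=|\nabla u|$ almost everywhere. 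Cauchy--Schwarz then gives, as usual,
\[
\int_X|\nabla \bar f|\le 2\Big(\int_X \bar v^2\Big)^{1/2}\Big(\int_X|\nabla\bar v|^2\Big)^{1/2}.
\]
Thus it suffices to prove the co-area inequality
\[
\int_X|\nabla\bar f|\ge h_\varphi(X)\int_X\bar f.
\]
Granting it, we get $h_\varphi\int\bar v^2\le 2\|\bar v\|\,\|\nabla\bar v\|$. Squaring yields $\int|\nabla\bar v|^2/\int\bar v^2\ge h_\varphi^2/4$, and taking the infimum over $u$ in \eqref{minmaxnew} gives $\lambda_1^\varphi\ge h_\varphi^2/4$. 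The statement about $h_{\mathrm{DC}}$ then follows by taking the supremum over $\varphi$.

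For the co-area step, set $A_t=\{\bar f>t\}$ for $t>0$. By the co-area formula, $\int_X|\nabla\bar f|=\int_0^\infty|\partial A_t|\,dt$ and $\int_X\bar f=\int_0^\infty|A_t|\,dt$, so it suffices to show $|\partial A_t|\ge h_\varphi|A_t|$ for almost every $t$. The key point, and the reason the oddness matters, is that $A_t$ is $\varphi$-admissible. To see this, note that $p^{-1}(A_t)=\{|u|>\sqrt t\}=\{u>\sqrt t\}\sqcup\{u<-\sqrt t\}$, and $\sigma$ swaps the two pieces. Take a connected component $A'$ of $A_t$. If $p^{-1}(A')$ were connected, it would be contained in one of these two open disjoint sets, yet it is $\sigma$-invariant, which is a contradiction. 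So condition (1) in the definition of admissibility holds.

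Some regularity remains to be checked. First, to make $\partial A_t$ smooth, I would rather take $u$ smooth and apply Sard's theorem to $u$ on $X_\varphi$: for almost every $s=\sqrt t$, the value $s$ is regular for $u$, hence $\pm s$ are regular values, and $\partial A_t=p(\{u=s\})$ is a smooth compact hypersurface. Second, $|\Bpartial A_t|$ versus $|\partial A_t|$: since $\partial A_t$ is a level set of a regular value, $A_t$ lies on one side locally, so $|\Bpartial A_t|=|\partial A_t|$ and the defining inequality $|\Bpartial A_t|\ge h_\varphi|A_t|$ applies directly. Finally, $A_t$ is nonempty only for $t<\max\bar f$, which is harmless.

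I expect the main obstacle to be precisely this admissibility-plus-regularity verification, namely handling the non-smoothness of $|u|$ at $u=0$. This is avoided because we only use the level sets $t>0$, and at those levels $\bar f$ is smooth wherever $\bar f>0$. The rest is the textbook Cheeger argument.
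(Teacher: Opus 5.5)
Your proposal is correct and follows essentially the same route as the paper. You descend $|u|$ to the base, show that the super-level sets are $\varphi$-admissible because $\sigma$ swaps $\{u>s\}$ and $\{u<-s\}$, and then run the coarea and Cauchy--Schwarz argument on $u^2$; your regularity remarks (Sard applied at regular levels $s>0$, so that $|\Bpartial A_t|=|\partial A_t|$) match the paper's use of Sard's theorem.
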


Every double cover is regular with deck group isomorphic to $\Ztwo$. Let $u_{\varphi}\in L^2_{\varphi}(X_{\varphi})\cap C_0^{\infty}(X_{\varphi})$. Then for each $x\in X$ with fiber $p^{-1}(x)=\{x_1,x_2\}$, we have $u_{\varphi}(x_1)=-u_{\varphi}(x_2)$. Thus, we can define a function $u$ on the base $X$ by setting $u(x)=|u_{\varphi}(x_1)|$ with $x_1\in p^{-1}(x)$ chosen arbitrarily. Since $u_{\varphi}$ is smooth, $u$ lies in the Sobolev space $H^1(X)$ and is smooth away from its zero-level set, and we have
\begin{equation}\label{rayleighequal}
    \frac{\int_{X}|\nabla u|^2}{\int_{X}|u|^2}=\frac{\int_{X_{\varphi}}|\nabla u_{\varphi}|^2}{\int_{X_{\varphi}}|u_{\varphi}|^2}.
\end{equation}

\begin{lem}\label{superadmissible}
    The nonempty super-level sets $A_t=\{u(x)> t\}$ are $\varphi$-admissible for almost every $t\geq 0$. 
\end{lem}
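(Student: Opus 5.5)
The plan is to verify the two requirements in the definition of $\varphi$-admissibility separately: smoothness of $\partial A_t$ for almost every $t$, and condition (1), that the pre-image of each connected component of $A_t$ is disconnected. For smoothness, observe that for $t>0$ the set $A_t=\{u>t\}$ avoids the zero-level set of $u$, and there $u=|u_\varphi\circ s|$ for a local section $s$ of $p$, so $u$ is smooth in a neighborhood of $\{u\geq t\}$ whenever $t>0$. By Sard's theorem applied to the smooth function $u$ on the open set $\{u>0\}$, almost every $t>0$ is a regular value. For such $t$, the level set $\{u=t\}$ is a smooth hypersurface and equals $\partial A_t$, because near a regular point both sides $\{u>t\}$ and $\{u<t\}$ occur. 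Compact support of $u_\varphi$ makes $\{u\geq t\}$ compact, so $\partial A_t$ is a closed smooth hypersurface. The single value $t=0$ is measure zero and may be discarded.

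For condition (1), fix such a $t>0$ and let $A'$ be a connected component of $A_t$. Consider the open set $\widehat{A}=p^{-1}(A')\subseteq X_\varphi$. On $\widehat A$ we have $|u_\varphi|>t>0$, so $\widehat A$ splits as the disjoint union of the two open sets
\[
\widehat A^+=\widehat A\cap\{u_\varphi>0\},\qquad \widehat A^-=\widehat A\cap\{u_\varphi<0\}.
\]
Both are nonempty. Indeed, each fiber $\{x_1,x_2\}$ over a point of $A'$ satisfies $u_\varphi(x_1)=-u_\varphi(x_2)\neq 0$, so one point of the fiber lies in $\widehat A^+$ and the other in $\widehat A^-$; moreover $\sigma$ exchanges the two sets. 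Hence $p^{-1}(A')$ is disconnected. This is exactly condition (1). Equivalently, $p|_{\widehat A^+}:\widehat A^+\to A'$ is a bijective local homeomorphism, hence a section, which gives condition (2).

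Nonemptiness is built into the statement, since it only concerns nonempty super-level sets. I expect the main obstacle to be the regularity bookkeeping rather than the topology. One has to justify that $u$ is smooth near $\partial A_t$, which is automatic because $t>0$ keeps us away from $\{u_\varphi=0\}$. One also has to check that Sard applies; since $u$ is $C^\infty$ on an open set mapping to $\mathbb{R}$, the critical values have measure zero. Together with discarding $t=0$, this yields the statement for almost every $t\geq 0$.
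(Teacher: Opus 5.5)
Your proposal is correct and follows essentially the same argument as the paper: Sard's theorem gives a smooth $\partial A_t$, and $p^{-1}(A_t')$ then splits into the disjoint open sets where $u_\varphi>0$ and $u_\varphi<0$, both nonempty by oddness. Your extra bookkeeping fills in details the paper leaves implicit: smoothness of $u$ near $\{u\ge t\}$ for $t>0$, the identification $\partial A_t=\{u=t\}$, compactness, and the section argument for condition (2).
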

\begin{proof}
    By Sard's theorem, almost every $A_t$ has smooth boundary. Let $A_t'\subseteq A_t$ be a nonempty connected component. Since $u_{\varphi}$ is odd with respect to the non-trivial deck transformation, $u_{\varphi}$ takes on both positive and negative values on $p^{-1}(A_t')$. Since $u_{\varphi}$ is continuous and does not vanish on $p^{-1}(A_t')$, the set $p^{-1}(A_t')$ is disconnected. 
\end{proof}

Given Lemma \ref{superadmissible}, the proof of Theorem \ref{dccheegerineq} follows from essentially the same computation used to prove the traditional Cheeger inequality. 

\begin{proof}[Proof of Theorem \ref{dccheegerineq}]
    Let $u_{\varphi}$ and $u$ be as above. Note that (by Sard's theorem) for almost every super-level set $A_t$ of $u$, we have $|\Bpartial A_t|=|\partial A_t|$. By Lemma \ref{superadmissible}, we therefore have \[|\partial A_t|\geq h_{\varphi}(X)|A_t|.\]
    Since the function $u_{\varphi}$ is smooth and compactly supported, the function $u^2$ is Lipschitz, so we may apply the coarea formula:
    \begin{align*}
        \int_X|\nabla (u^2)|=\int_{0}^{\infty}|\partial A_{\sqrt{t}}|dt\geq h_{\varphi}(X)\int_0^{\infty}|A_{\sqrt{t}}|=h_{\varphi}(X)\int_{X}|u|^2. 
    \end{align*}
    By (\ref{rayleighequal}), we also have \[\int_{X}|\nabla (u^2)|=2\int_{X}|u|\cdot |\nabla u|\leq 2\|u\|_{L^2(X)}\|\nabla u\|_{L^2(X)}.\] Combining these two inequalities gives \[\frac{\int_{X}|\nabla u|^2}{\int_{X}|u|^2}\geq \frac14 h_{\varphi}(X)^2.\] Since this inequality holds for every odd smooth function with compact support on $X_{\varphi}$, Equation (\ref{rayleighequal}) along with the variational characterization (\ref{minmaxnew}) of $\lambda_1^{\varphi}$ implies the result. 
\end{proof}

\section{Hyperbolic surfaces with double covers with no new small eigenvalues}

In this section, $X$ will always be a complete, finite-area hyperbolic surface. We will prove Theorems \ref{fourpuncturedthm} and \ref{goodcoversurfacesthm}. The following definition and lemma will allow us to more easily prove lower bounds on double cover Cheeger constants in some situations.

\begin{defn}
    A $\varphi$-admissible set $A\subseteq X$ of a hyperbolic surface is \textbf{reduced} if it is connected and one of the following holds:
    \begin{enumerate}
        \item $A$ is a topological disk, or 
        \item No curve in $\partial A$ bounds a topological disk in $X$. 
    \end{enumerate}
\end{defn}

\begin{lem}\label{reducedlemma}
    Let $A\subseteq X$ be $\varphi$-admissible. If $A$ is not reduced, then there exists a reduced $\varphi$-admissible set $A'\subseteq X$ for which \[\frac{|\overline\partial A'|}{|A'|}\leq \frac{|\overline\partial A|}{|A|}.\]
\end{lem}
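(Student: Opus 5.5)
Two reductions are needed: pass to a connected component, then fill in boundary curves that bound disks.

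\emph{Step 1: connectedness.} Write $A=\bigsqcup_j A_j$ as the union of its connected components; there are countably many, since $\partial A$ is a smooth hypersurface. The metric completion of $A$ is the disjoint union of the completions of the $A_j$, so $|\Bpartial A|=\sum_j|\Bpartial A_j|$ and $|A|=\sum_j |A_j|$. A ratio of sums is at least the smallest of the individual ratios, so some component satisfies $|\Bpartial A_j|/|A_j|\le |\Bpartial A|/|A|$. Each $A_j$ is $\varphi$-admissible by condition (1), since its preimage is disconnected. We may therefore assume $A$ is connected.

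\emph{Step 2: filling in disks.} Suppose $A$ is connected, not a disk, and some boundary curve $c$ bounds a disk $D\subseteq X$. (Since $X$ is a surface, each component of $\Bpartial A$ is a closed curve, or possibly two copies of a curve in $\partial A$ when $A$ lies on both sides of it.) There are two cases.
\begin{itemize}
\item If $D\cap A=\emptyset$, set $A_1=A\cup D\cup c$. This is open, smoothly bounded and connected. Its area has strictly increased, and its boundary length has decreased by at least $\ell(c)$, since $c$ is no longer a boundary curve and no new boundary is created.
\item If $A\subseteq D$, then because $A$ is connected it lies inside the disk. In this case replace $A$ by the disk $D'$ obtained by filling in all the holes of $A$ inside $D$, that is, the complementary component in $D$ of the outer boundary. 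Then $|D'|\ge|A|$ and $|\partial D'|\le |\Bpartial A|$, and $D'$ is reduced of type (1).
\end{itemize}
Iterate the first case. There are finitely many boundary curves in the compact-boundary situation. In general only the curves relevant to the ratio matter, and we can first truncate, or argue that the ratio is finite only when $|\Bpartial A|<\infty$; I would handle this with a limiting remark. The process ends in a set that is either a disk or has no boundary curve bounding a disk, i.e.\ a reduced set. Each step decreases the numerator and increases the denominator, so the ratio does not increase.

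\emph{Admissibility is preserved.} In the first case, $A_1$ deformation retracts onto a space homotopy equivalent to $A\cup D$. By Mayer--Vietoris, $H_1(A\cup D)$ is generated by the image of $H_1(A)$ together with $H_1(D)=0$. Hence the image of $H_1(A_1)$ in $H_1(X)$ equals the image of $H_1(A)$ and still lies in $\ker\varphi$, so condition (2) holds. A disk has trivial $H_1$, so it is admissible for every $\varphi$.

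\emph{Main obstacle.} The step I expect to be the most delicate is the bookkeeping with $\Bpartial$. A boundary curve $c$ in $\partial A$ may border $A$ on both sides, as in the example $X\setminus\alpha$. Such a $c$ cannot bound a disk disjoint from $A$ in the usual sense. I would treat it by noting that if $c$ bounds a disk $D$, then $A$ meets both $D$ and its complement. Filling $D$ removes the two copies of $c$ from $\Bpartial A$, and the Mayer--Vietoris argument still shows the homology image is unchanged. Smoothness of the new boundary needs a brief check, and is automatic here because we only delete boundary components.
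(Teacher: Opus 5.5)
Your proposal is correct and follows the paper's proof essentially step for step. Both pass to a component of minimal isoperimetric ratio via the mediant inequality, fill in the disks bounded by boundary curves, and use Mayer--Vietoris to see that $H_1(A_0)\to H_1(A')$ is surjective. The paper fills all such disks at once, which spares you the limiting remark about infinitely many curves, and it covers your case $A\subseteq D$ automatically, since then $A'$ is a disk.

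The worry in your last paragraph is vacuous rather than delicate. A connected $A$ disjoint from a curve $c$ bounding a disk $D$ lies either in the interior of $D$ or in $X\setminus D$. So a curve bordering $A$ on both sides never bounds a disk, your dichotomy $D\cap A=\emptyset$ or $A\subseteq D$ is exhaustive, and the claim that $A$ meets both $D$ and its complement is false.
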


\begin{proof}
    Recall that for any four positive numbers $a,b,c,d>0$, we have \[\min\Big\{\frac ab,\frac cd\Big\}\leq \frac{a+c}{b+d}.\] This inequality immediately implies that if $A_0\subseteq A$ is the connected component of $A$ having minimal isoperimetric ratio, then its isoperimetric ratio is at most that of $A$. Let $A'$ equal the union of $A_0$ with the closed topological disks bounded by curves in $\partial A_0$. Then clearly $A'$ has smaller perimeter and larger area than $A_0$, so we have 
    \[\frac{|\overline\partial A'|}{|A'|}\leq \frac{|\overline\partial A_0|}{|A_0|}\leq \frac{|\overline\partial A|}{|A|}.\] It therefore remains to show that $A'$ is $\varphi$-admissible (because it is then also reduced). Since $\iota_{A_0}:A_0\hookrightarrow X$ factors through the map $\iota_{A'}:A'\hookrightarrow X$, we have a factorization \[H_1(A_0)\longrightarrow H_1(A')\xrightarrow{(\iota_{A'})_*} H_1(X).\] To show that the image of $H_1(A')$ lies in the kernel of $\varphi$, it suffices to show that the first map $H_1(A_0)\to H_1(A')$ is surjective since $A_0$ is $\varphi$-admissible. This follows quickly from a Mayer-Vietoris argument. 
  %  \LH{Recording the Mayer-Vietoris argument in case it isn't sufficiently obvious:
  %  Let $B$ be an open neighborhood of the disks we added to $A_0$ such that the intersection $A_0\cup B$ deformation retracts to the boundary components of $A_0$ to which we added the disks. Further suppose that $B$ is also a union of topological disks. The Mayer-Vietoris sequence for $A'=A_0\cup B$ gives \[\dots\rightarrow H_1(A_0)\oplus H_1(B)\cong H_1(A_0)\rightarrow H_1(A')\rightarrow H_0(A_0\cap B)\rightarrow H_0(A_0)\oplus H_0(B)\rightarrow\dots \]  The map \[H_0(A_0\cap B)\longrightarrow H_0(A_0)\oplus H_0(B)\] induced by inclusion is clearly injective, so the map $H_1(A')\to H_0(A_0\cap B)$ is the zero map. This implies that the map $H_1(A_0)\to H_1(A')$ is surjective, but this map is exactly the map induced by the inclusion $\iota:A_0\hookrightarrow A'$.}
\end{proof}

By Lemma \ref{reducedlemma}, we could redefine the $\varphi$-Cheeger constant using only reduced admissible sets. \\
\indent The next lemma is well known (see for instance \cite[Theorem 2.2]{AdamsMorgan}) and shows that, if we wish to prove that some $\varphi$-Cheeger constant is greater than $1$, then we need only consider sets that are not topological disks or annuli.

\begin{lem}\label{diskandannulus}
    Let $A\subseteq X$ be a topological disk or annulus not homotopic to a cusp. Then \[\frac{|\overline\partial A|}{|A|}>1.\]
\end{lem}
%\begin{proof}
%    If $A$ is a topological disk, then it is isometric to a subset of the hyperbolic plane, and this follows from the hyperbolic isoperimetric inequality. If $A$ is an annulus, then let $\tilde A\subseteq \Hbb^2$ be a connected component of a preimage of $A$ under some universal covering map. If $A$ is contractible in $X$, then $\tilde A$ is an annulus, and we are once again done by the hyperbolic isoperimetric inequality. Otherwise, \LH{the following proof was given to me by chatgpt. Is there a reference for this?} let $\gamma\in \pi_1(X)$ be a class corresponding to a boundary curve of $A$. Let $p:X'\to X$ be the infinite cyclic cover corresponding to the subgroup $\langle \gamma\rangle \leq \pi_1(X)$. Then $X'$ is a hyperbolic cylinder containing a lift $A'$ of $A$ with area and perimeter equal to those of $A$. On $X'$, we may compute in Fermi coordinates $(r,t)\in \Rbb\times (\Rbb/L\Zbb)$ for some $L>0$ (equal to the length of the closed geodesic homotopic to the boundary curves of $A$) with the metric \[ds^2=dr^2+\cosh^2(r)dt^2.\] Define a vector field $V=\tanh(r)\partial_r$ on $X'$. Then $|V|=|\tanh(r)|<1$ for all $(r,t)$, and the divergence of $V$ is identically equal to $1$. By the divergence theorem, we get \[|A|=|A'|=\int_{A'}1=\int_{A'}{\rm div}(V)=\int_{\partial A'}\langle V,\nu\rangle <\int_{\partial A'}1=|\partial A'|=|\partial A|.\]
%\end{proof}

We are now ready to prove that all four punctured spheres have good double covers:
\begin{thmrep}{\ref*{fourpuncturedthm}}
        Every surface $X\in \Mcal_{0,4}$ has a double cover $X'\in \Mcal_{1,4}$ with no new eigenvalues in $[0,1/4)$.
\end{thmrep}
\begin{proof}
    Let $X$ be a hyperbolic four punctured sphere. By Theorem \ref{dccheegerineq}, it suffices to prove that $h_{\mathrm{DC}}(X)>1$. Let $\alpha_1,\alpha_2,\alpha_3$ be disjoint simple closed curves, each homotopic to a distinct cusp. For $i=1,2,3$, let $a_i\in H_1(X)$ denote the homology class of $\alpha_i$. Then $H_1(X)\cong (\Ztwo)^3$ is generated by the $a_i$. \\
    \indent Let $\varphi:H_1(X)\to \Ztwo$ be the map defined by sending each $a_i$ to $1$. Under the double cover $X_{\varphi}$ corresponding to this map, a neighborhood of any cusp is \textit{not} $\varphi$-admissible, so $X_{\varphi}$ has four cusps and, therefore, genus one. \\
    \indent By Lemmas \ref{reducedlemma} and \ref{diskandannulus} it suffices to show that any reduced $\varphi$-admissible set $A\subseteq X$ is a topological disk or an annulus homotopic to a cusp. Let $A\subseteq X$ be a reduced $\varphi$-admissible set that is not a topological disk. Since $A$ is $\varphi$-admissible, it does not have a cusp, and none of its boundary components are homotopic to a cusp. In particular, $\partial A$ has at least two connected components $\gamma_1,\gamma_2$. Each $\gamma_j$ is a simple closed curve whose homology class lies in the kernel of $\varphi$. Since $X$ is a four punctured sphere, if neither $\gamma_j$ is nullhomotopic, they must be homotopic to each other (else they intersect). Since $A$ is reduced, it must be a topological annulus. 
\end{proof}

We can now also prove Theorem \ref{goodcoversurfacesthm}:

\begin{thmrep}{\ref*{goodcoversurfacesthm}}
    For any $g\geq 2$, $\Mcalg$ contains a nonempty open set of surfaces $X$  with $h_{\mathrm{DC}}(X)>1$ and thus surfaces $X$ that admit a double cover without new eigenvalues in $[0,1/4)$. 
\end{thmrep}

\begin{proof}
    First we choose a particular pants decomposition on a closed surface of genus $g$. The easiest way to describe it is with a picture, which can be found in Figure \ref{pic_thinsurf}. For the reader who prefers to be confused by a formal description: the pants decomposition is made up of simple closed curves $\{\alpha_i\}_{i=1}^g\cup \{\alpha_j'\}_{j=2}^{g-1}\cup \{\gamma_k\}_{k=1}^{g-1}$, such that:
    \begin{itemize}
    \item $\alpha_1$ and $\gamma_1$ bound a pair of pants $P_1$ and $\alpha_1$ is incident to $P_1$ on both sides,
    \item for all $k=2,\ldots, g-1$, $\gamma_{k-1}$, $\alpha_k'$ and $\alpha_k$ bound a pair of pants $P_{2k-2}$,
    \item for all $k=2,\ldots, g-1$,  $\alpha_k'$ and $\gamma_k$ and $\alpha_k$ bound a pair of pants $P_{2k-1}$, and
    \item $\gamma_{g-1}$ and $\alpha_g$ bound a pair of pants $P_{2g-2}$ in such a way that $\alpha_g$ is incident to $P_{2g-2}$ on both sides.
    \end{itemize}
    
    \begin{figure}
        \vspace*{0.5cm}
        \centering
        \begin{overpic}{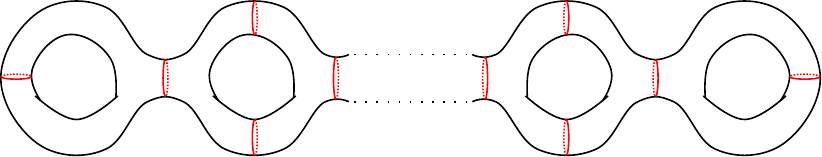}
        \put(-3.5,9){$\alpha_1$}
        \put(19,4){$\gamma_1$}
        \put(29.5,-2.5){$\alpha_2$}
        \put(29.5,20.5){$\alpha_2'$}
        \put(40,4){$\gamma_2$}
        \put(56,4){$\gamma_{g-2}$}
        \put(67.5,-2.5){$\alpha_{g-1}$}
        \put(67.5,20.5){$\alpha_{g-1}'$}
        \put(77.5,4){$\gamma_{g-1}$}
        \put(100.5,9){$\alpha_g$}
        \end{overpic}
        \caption{A surface with a linear pants decomposition}
        \label{pic_thinsurf}
    \end{figure}

    Now set
    \[
    L(g) = 2\sinh^{-1}\Big(\frac{1}{\sinh(2\pi(g-1))}\Big).
    \] 
    Let $\Acal_g \subset \Mcal_g$ denote the set of surfaces that admit a pants decomposition of the topological type described above in which all curves have length below $L(g)$, with arbitrary twists. This is an open subset of $\Mcal_g$. 
    
    Now let $X\in\Acal_g$ be any surface, equipped with its short pants decomposition. The set of boundary curves of the $P_i$ in $X$ contains a subset of disjoint simple closed geodesics $\{\alpha_1,\dots,\alpha_g\}$ whose images $\{a_1,\ldots,a_g\}$ in $H_1(X)$ generate a Lagrangian subspace for the intersection form. Moreover, the inclusion map for each pair of pants induces a nontrivial map $H_1(P_i)\to H_1(X)$.
    
    Let $\varphi:H_1(X)\to \Ztwo$ be a homomorphism mapping each $a_i$ to $1$. Then none of the images of the maps $H_1(P_i)\to H_1(X)$ are contained in $\ker(\varphi)$. By Lemma \ref{reducedlemma} we need only show that any reduced $\varphi$-admissible subset has isoperimetric ratio greater than $1$. Suppose that $A\subseteq X$ is $\varphi$-admissible and reduced with $|\overline\partial A|\leq |A|$. By Lemma \ref{diskandannulus}, $A$ is neither a topological disk nor a topological annulus. In particular, no boundary curve of $A$ is nullhomotopic. Moreover, we have $|\overline\partial A|\leq |X|=4\pi(g-1)$. By the collar lemma and the assumption on the boundary lengths of the $P_i$, each curve in $\partial A$ is homotopic to one of the curves defining the pants decomposition of $X$. Therefore, since $A$ is not an annulus, it admits a (not necessarily geodesic) pants decomposition consisting of curves homotopic to the curves used to define the pants decomposition of $X$. But then the map $H_1(A)\to H_1(X)$ induced by inclusion is not in the kernel of $\varphi$, contradicting that $A$ is $\varphi$-admissible. 
\end{proof}

\section{The double cover Cheeger constant of the Bolza surface}\label{bolzasection}

In this final section, we prove the bound on the double cover Cheeger constant on the Bolza surface that we stated in the introduction:
\begin{prprep}{\ref*{doublecheegerbolza}}
The Bolza surface $X_{\mathrm{B}}$ satisfies
\[
h_{\mathrm{DC}}(X_{\mathrm{B}}) \leq 2\;\mathrm{arccosh}\left(1+\sqrt{2}\right)/\;\pi = 0.973118\ldots.
\]
\end{prprep}

\begin{proof}
    We will first prove that every non-trivial element of $H_1(X_{\mathrm{B}})$ (with coefficients in $\Ztwo$) can be represented by a simple closed multi-geodesic whose length is at most $4 \;\mathrm{arccosh}\left(1+\sqrt{2}\right)$.
    
    To do so, recall that the Bolza surface can be obtained by gluing together the opposite sides of a regular octagon with interior angles $\pi/4$. There are multiple ways to use this picture to extract a basis for $H_1(X_{\mathrm{B}})$. We will use the side pairings. The four M\"obius transformations $g_1,g_2,g_3,g_4$ that pair the sides of the octagon in the correct order (for any fixed embedding of the octagon in the hyperbolic plane) generate a Fuchsian group $\Gamma_{\mathrm{B}}$ that uniformizes $X_{\mathrm{B}}$. The images $a_1,a_2,a_3,a_4\in H_1(X_{\mathrm{B}})$ of these generators under the composition $\Gamma_{\mathrm{B}}\to H_1(X_{\mathrm{B}})$ of the abelianization map with reduction modulo $2$ form a (non-symplectic) basis of $H_1(X_{\mathrm{B}})$.

    Now we need to know something about the geometry of the Bolza surface (or the octagon). We will label the sides of the octagon in order by $s_1,s_2,s_3,s_4,s_1',s_2',s_3',s_4'$.    We refer to \cite{Jenni,Schmutz} for proofs of the following geometric claims:
    \begin{enumerate}
        \item The side pairings themselves (corresponding to $a_i$ for $i=1,2,3,4$) generate closed loops that are homotopic to the projections of the orthogeodesics connecting opposite sides $s_i$ and $s_i'$. These are systoles and have length $2\; \mathrm{arccosh}\left(1+\sqrt{2}\right)$. 
        \item The loops formed by the pair of segments connecting $s_i$ to $s_{i+1}$ and $s_{i+1}'$ to $s_i'$ for $i=1,2,3,4$ (indices are considered modulo $4$ here) are homotopic to systoles as well. Because of the duality of the sides to the homology classes, these loops represent the classes $a_i+a_{i+1}$, for $i=1,2,3,4$.
        \item The sides of the octagon themselves project to systoles as well. By pushing $s_i$ off of the vertex, one obtains a loop that runs through all the pairs of sides, except $s_i$ itself. As such, all classes of the form $a_i+a_j+a_k$ with $i$, $j$ and $k$ distinct can be represented by systoles.
        \item The remaining classes can all be obtained as sums of two of the classes above and, as such, they end up on the right-hand side of the table below.
    \end{enumerate}
    We note that at least $a_1+a_2+a_3+a_4$ can be represented by a loop that is slightly shorter than twice the systole (see \cite[Section 7.4]{FortierBourquePetri}). But twice the systole is enough for what follows, so we did not make the effort to try to represent the remaining loops. 
    
    All in all, we obtain the following table of upper bounds on the lengths of simple closed multi-geodesics representing our homology classes:
    \begin{center}
        \begin{tabular}{l|l}
        \textbf{One systole} & \textbf{Two systoles} \\
        \hline
         $a_1$,  $a_2$, $a_3$, $a_4$,   &  $a_1+a_3$, $a_2+a_4$ \\
         $a_1+a_2$, $a_2+a_3$, &  $a_1+a_2+a_3+a_4$\\
         $a_3+a_4$, $a_4+a_1$    &  \\
         $a_1+a_2+a_3$, $a_1+a_2+a_4$, & \\
         $a_1+a_3+a_4$, $a_2+a_3+a_4$ 
        \end{tabular}
    \end{center}

    Now we use the fact that for every non-trivial homomorphism $\varphi:H_1(X_{\mathrm{B}})\to\Ztwo$, there exists a homology class $a\in H_1(X_{\mathrm{B}})$ such that $\varphi = i(a,\cdot)$. In particular, that means that if $\alpha$ is a simple closed multi-geodesic that represents $a$, then $X_{\mathrm{B}}\setminus \alpha$ is $\varphi$-admissible and $|\overline\partial (X_{\mathrm{B}}\setminus \alpha)|=2\cdot \ell(\alpha)$. As such
    \[
    h_\varphi(X_{\mathrm{B}}) \leq \frac{2\cdot \ell(\alpha)}{\mathrm{area}(X_{\mathrm{B}})} \leq \frac{8\;\mathrm{arccosh}\left(1+\sqrt{2}\right)}{4\pi},
    \]
    as claimed.
    
\end{proof}

\bibliographystyle{alpha}
\bibliography{refs}

\end{document}